\theoremstyle{plain}
\newtheorem{theorem}{Theorem}
\newtheorem{prop}[theorem]{Proposition}
\newtheorem{lem}[theorem]{Lemma}
\newtheorem{cor}[theorem]{Corollary}
\theoremstyle{definition}
\newtheorem{rem}[theorem]{Remark}
\newcommand{\subjclass}[2][2020]{%
  \let\@oldtitle\@title%
  \gdef\@title{\@oldtitle\footnotetext{#1 \emph{Mathematics subject classification.} #2}}%
}
\newcommand{\keywords}[1]{%
  \let\@@oldtitle\@title%
  \gdef\@title{\@@oldtitle\footnotetext{\emph{Key words and phrases.} #1.}}%
}
\begin{document}

\title{Some remarks related to the density of \\$\{(b^n\pmod n)/n:n\in\mathbb{N}\}$}
\author{Martin Lind}
\affil{Department of Mathematics and Computer Science, Karlstad University, Universitetsgatan 2, 65188 Karlstad, Sweden\\ e-mail: \href{mailto:martin.lind@kau.se}{martin.lind@kau.se}}
\subjclass{11B05}
\keywords{density, limit points, derived set, arithmetic progressions}
\date{}

\maketitle

\begin{abstract}
    For $b\in\mathbb{N}, b\ge2$ we determine the limit points of certain subsets of
    $$
        \left\{\frac{b^n\pmod{n}}{n}:n\in\mathbb{N}\right\}.
    $$
    As a consequence, we obtain the density of the latter set in $[0,1]$, a result first established in 2013 by 
    Cilleruelo, Kumchev, Luca, Ru\'{e} and Shparlinski.
\end{abstract}

\section{Introduction}

Let $b\in\mathbb{N}, b\ge2$. In 2013, Cilleruelo et al. \cite{CKLRS} proved that
\begin{equation}
    \label{Seqb}
    \left\{\frac{b^n\pmod{n}}{n}:n\in\mathbb{N}\right\}
\end{equation}
is dense in $[0,1]$. In fact, they proved a stronger statement. Let $\mathbb{P}=\{2,3,\ldots\}$ denote the prime numbers and set 
\begin{equation}
    \label{Seqb2}
    \left\{\frac{b^{qp}\pmod{qp}}{qp}:p,q\in\mathbb{P}, q\le\log(p)/\log(b)\right\}.
\end{equation}
The main result of \cite{CKLRS} is an explicit rate of decay for the discrepancy of (\ref{Seqb2}). In particular, (\ref{Seqb2}) is equidistributed modulo 1 (in the sense of Weyl); this implies the density of (\ref{Seqb2}). In this direction, we also mention the works of Dubickas \cite{D1, D2} where, among other results, the density is established for much more general sets than (\ref{Seqb}).

In this note, we consider (\ref{Seqb2}) from a different point of view to \cite{CKLRS}. Namely, for a \emph{fixed} $q\in\mathbb{P}$, we determine all possible limits of $b^{pq}\pmod{pq}/pq$ as $p\rightarrow\infty, p\in\mathbb{P}$. (Theorem \ref{derived} below). This result is sufficient to establish the density of (\ref{Seqb}).
For any set $A\subseteq\mathbb{R}$, let $A'$ be the set of all limit points of $A$. ($A'$ is usually referred to as the \emph{derived set} of $A$.)
\begin{theorem}
\label{derived}
Let $b\in\mathbb{N}, b\ge2$ and let $q\in\mathbb{P}$ be a fixed but arbitrary prime number with $q>b^2$. Denote 
\begin{equation}
    \nonumber
    \mathcal{S}_{b,q}=\left\{\frac{b^{qp}\pmod{qp}}{qp}:p\in\mathbb{P}\right\}.
\end{equation}
Then 
\begin{equation}
    \label{derivedSet}
    \mathcal{S}_{b,q}'=\left\{\frac{k}{q}: k=0,1,2,\ldots, q-1\right\}.
\end{equation}
\end{theorem}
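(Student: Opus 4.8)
The plan is to pin down $b^{qp}\bmod qp$ through the Chinese Remainder Theorem, treating the factors $p$ and $q$ separately. First I would restrict to primes $p>\max\{q,b^{q}\}$, discarding finitely many terms (which cannot affect the derived set). Fermat's little theorem gives $b^{qp}=(b^{p})^{q}\equiv b^{q}\pmod p$, and since $b^{q}<p$ the least nonnegative residue of $b^{qp}$ modulo $p$ is \emph{exactly} $b^{q}$. Likewise $b^{qp}\equiv b^{p}\pmod q$. Writing $N_{p}=b^{qp}\bmod qp$, the condition $N_{p}\equiv b^{q}\pmod p$ with $0\le N_{p}<qp$ forces
\begin{equation}\nonumber
N_{p}=b^{q}+j_{p}\,p,\qquad j_{p}\in\{0,1,\dots,q-1\},
\end{equation}
and reducing modulo $q$, using $b^{q}\equiv b\pmod q$, determines $j_{p}$ uniquely by $j_{p}\equiv (b^{p}-b)\,p^{-1}\pmod q$, where $p^{-1}$ is the inverse of $p$ modulo $q$.

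The containment $\mathcal{S}_{b,q}'\subseteq\{k/q\}$ is then immediate. Since
\begin{equation}\nonumber
\frac{N_{p}}{qp}=\frac{j_{p}}{q}+\frac{b^{q}}{qp},
\end{equation}
every surviving element of $\mathcal{S}_{b,q}$ lies within $b^{q}/(qp)=O(1/p)$ of one of the points $0,1/q,\dots,(q-1)/q$. Hence any limit point must be one of these rationals.

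For the reverse containment I must exhibit each $k/q$ as a genuine limit, and this is the crux. Let $d$ be the multiplicative order of $b$ modulo $q$ (well defined, as $q>b^{2}>b$ gives $q\nmid b$). The hypothesis $q>b^{2}$ enters exactly here: it yields $q\nmid b-1$ and $q\nmid b+1$, so $b\not\equiv\pm1\pmod q$ and therefore $d\ge3$. Because $b^{p}\equiv b^{\,p\bmod d}\pmod q$ with $d\mid q-1$ (so $\gcd(d,q)=1$), I can prescribe $b^{p}\bmod q$ and $p\bmod q$ independently by fixing $p$ modulo $dq$, and Dirichlet's theorem supplies infinitely many primes in any residue class coprime to $dq$. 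To reach $k=0$, take $p\equiv1\pmod{dq}$, whence $b^{p}\equiv b$ and $j_{p}\equiv0$, so $j_{p}=0$ and $N_{p}/(qp)\to0$. To reach $k\in\{1,\dots,q-1\}$, fix the exponent class $p\equiv-1\pmod d$; this is admissible since $\gcd(d-1,d)=1$, and since $d\ge3$ it gives $b^{p}\equiv b^{-1}\not\equiv b\pmod q$, so $b^{-1}-b\not\equiv0\pmod q$. Along this class $j_{p}\equiv (b^{-1}-b)\,p^{-1}\pmod q$, and as $p$ runs through the reduced residues modulo $q$ the quantity $(b^{-1}-b)\,p^{-1}$ runs through all of $(\mathbb{Z}/q\mathbb{Z})^{\times}$. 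Selecting the residue making $j_{p}\equiv k$ and invoking Dirichlet produces primes $p\to\infty$ along which $j_{p}=k$ is constant, so $N_{p}/(qp)\to k/q$.

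The main obstacle is precisely this last step: steering $b^{qp}\bmod q$ and $p\bmod q$ at the same time, where the coprimality requirement in Dirichlet's theorem restricts the usable exponent classes modulo $d$. The inequality $q>b^{2}$ is what excludes the degenerate orders $d\in\{1,2\}$ and thereby guarantees an exponent class (namely $-1\bmod d$) that is simultaneously admissible for Dirichlet and forces $b^{p}\not\equiv b\pmod q$; this nonvanishing is exactly what allows a single sweep over $p\bmod q$ to cover every nonzero residue of $j_{p}$, completing the inclusion $\{k/q\}\subseteq\mathcal{S}_{b,q}'$.
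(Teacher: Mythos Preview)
Your proof is correct and follows essentially the same route as the paper's: both compute $b^{qp}\bmod qp$ via the Chinese Remainder Theorem, identify the residue as $b^{q}+j_{p}\,p$ with $j_{p}\equiv(b^{p}-b)\,p^{-1}\pmod q$, and then force $j_{p}=k$ by choosing $p\equiv -1\pmod d$ (so that $b^{p}\equiv b^{-1}\not\equiv b$, using $d\ge 3$ from $q>b^{2}$) together with the appropriate residue of $p$ modulo $q$, and invoke Dirichlet's theorem on primes in arithmetic progressions. The paper packages this through the auxiliary solution sets $Z_{k}\subset\mathbb{Z}^{*}[q(q-1)]$ and a representation lemma for that group, whereas you work directly modulo $dq$; this is a mild streamlining (it bypasses the paper's Lemma on $\mathbb{Z}^{*}[q(q-1)]$ and makes the inclusion $\mathcal{S}_{b,q}'\subseteq\{k/q\}$ immediate from $N_{p}/(qp)=j_{p}/q+b^{q}/(qp)$), but not a different idea.
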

Figure \ref{figure} below illustrate the rather striking structure of $\mathcal{S}_{2,13}$. Note in particular the 13 "bands" corresponding to the 13 limit points.
\begin{figure}[H]
\label{figure}
\begin{center}
    \includegraphics[scale=1]{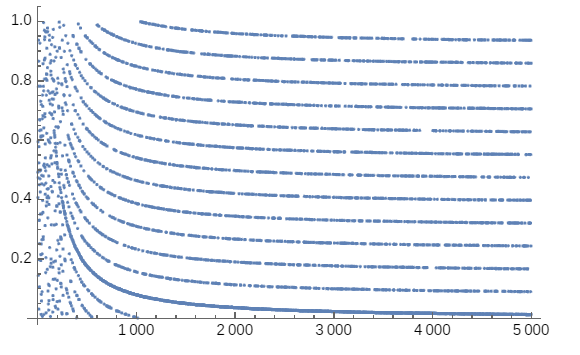}
\end{center}
\caption{$2^{13p}\pmod{13p}/13p$ for the first 5000 prime numbers $p$.}
\end{figure}
We have the following immediate consequence.
\begin{cor}
    The set $\mathcal{S}_b$ defined by (\ref{Seqb}) is dense in $[0,1]$.
\end{cor}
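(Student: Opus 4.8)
The plan is to derive the density of $\mathcal{S}_b$ directly from Theorem \ref{derived} by letting the auxiliary prime $q$ vary, so that the whole mathematical content rests in the theorem already proved. First I would note the obvious inclusion $\mathcal{S}_{b,q}\subseteq\mathcal{S}_b$: every element of $\mathcal{S}_{b,q}$ has the form $(b^n\bmod n)/n$ with $n=qp\in\mathbb{N}$. Since taking limit points is monotone ($A\subseteq B$ implies $A'\subseteq B'$), this gives $\mathcal{S}_{b,q}'\subseteq\mathcal{S}_b'$. Invoking Theorem \ref{derived}, which applies to \emph{every} prime $q>b^2$, I then obtain
\begin{equation}
    \nonumber
    \left\{\frac{k}{q}: k=0,1,\ldots,q-1\right\}\subseteq\mathcal{S}_b'\qquad\text{for every prime }q>b^2.
\end{equation}

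Next I would show that the union of these arithmetic progressions, taken over all primes $q>b^2$, is dense in $[0,1]$. Given $x\in[0,1)$ and $\varepsilon>0$, Euclid's theorem supplies a prime $q>b^2$ with $1/q<\varepsilon$; choosing $k=\lfloor qx\rfloor\in\{0,1,\ldots,q-1\}$ yields $|x-k/q|<1/q<\varepsilon$. Thus the union is dense in $[0,1)$, and since $[0,1)$ is dense in $[0,1]$ this union is in fact dense in $[0,1]$. As each point of the union lies in $\mathcal{S}_b'$, the set $\mathcal{S}_b'$ is dense in $[0,1]$.

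Finally I would close the argument topologically. Because $b^n\bmod n\in\{0,1,\ldots,n-1\}$, every element of $\mathcal{S}_b$ lies in $[0,1)$, so $\overline{\mathcal{S}_b}\subseteq[0,1]$. Conversely, $\mathcal{S}_b'\subseteq\overline{\mathcal{S}_b}$, and since $\overline{\mathcal{S}_b}$ is closed and contains the dense set $\mathcal{S}_b'$, it must contain all of $[0,1]$. Hence $\overline{\mathcal{S}_b}=[0,1]$, which is the assertion. I do not anticipate a genuine obstacle here: the substance is entirely in Theorem \ref{derived}, and the only points requiring mild care are the elementary set-theoretic monotonicity of the derived-set operation and the handling of the endpoint $x=1$, both dispatched above.
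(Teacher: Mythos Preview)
Your proposal is correct and follows essentially the same approach as the paper: both arguments fix a large prime $q$, invoke Theorem~\ref{derived} to see that the points $k/q$ are limit points of $\mathcal{S}_b$, and then use the density of such fractions in $[0,1]$. The only difference is cosmetic---you package the argument via the derived-set inclusion $\mathcal{S}_{b,q}'\subseteq\mathcal{S}_b'$ and a closure step, whereas the paper directly exhibits, for each interval $J$, an element of $\mathcal{S}_b$ lying in $J$.
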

\begin{proof}
    We must show that for any interval $J\subset[0,1]$ there is $s\in\mathcal{S}_b$ such that $s\in J$. Take a prime $q$ such that $1/q<\text{length}(J)$. Then there is $k\in\{0,1,\ldots,q-1\}$ such that $k/q\in J$. By Theorem \ref{derived}, there is a prime $p$ such that $(b^{qp}\pmod{qp})/qp\in J$ (actually, there's an infinite number of such primes).  Since $(b^{qp}\pmod{qp})/qp\in\mathcal{S}_b$, we may take $s=(b^{qp}\pmod{qp})/qp$.
\end{proof}
To calculate the order of the discrepancy of the sequence (\ref{Seqb2}), the authors of \cite{CKLRS} use careful estimates of exponential sums. Our proof of Theorem \ref{derived} is on the other hand completely elementary, except for invoking Dirichlet's theorem on the infinitude of primes in arithmetic progression at one point. 
We discuss in more detail our proof. Denote by $\mathbb{Z}[n]=\mathbb{Z}/n\mathbb{Z}$ and $\mathbb{Z}^*[n]=(\mathbb{Z}/n\mathbb{Z})^*$ the additive group and multiplicative group of integers modulo $n$, respectively.
A crucial step of the proof of Theorem \ref{derived} consists of showing that certain modular equations always have solutions. More specifically, fix a prime number $q>b^2$. For any $k\in\mathbb{Z}[q]$ we consider the equation
\begin{equation}
    \label{solutionEq}
    b^x\equiv kx+b\pmod{q}.
\end{equation}
For each $k\in\mathbb{Z}[q]$ denote by $Z_k$ the set of solutions in 
$\mathbb{Z}^*[q(q-1)]$ 
to the equation (\ref{solutionEq}), i.e.,
\begin{equation}
\label{solSets}
Z_k=\{x\in\mathbb{Z}^*[q(q-1)]: b^x\equiv kx+b\pmod{q}\}.
\end{equation}
In Section 2, we show (Lemma \ref{MainLemma}) that $Z_k\neq\emptyset$ for each $k\in\mathbb{Z}[q]$. Using this fact together with the above mentioned theorem of Dirichlet on primes in arithmetic progressions, Theorem \ref{derived} can be proved. 

In Section 3, we elaborate on Lemma \ref{MainLemma} and determine the exact structure of the sets (\ref{solSets}) (see Theorem \ref{structureThm} below). Since this further analysis is not needed for the proof of Theorem \ref{derived}, we place it in a separate section. Furthermore, Theorem \ref{structureThm} is rather complicated to formulate so we omit its statement here. However, we point out a few of its consequences below.

For Theorem \ref{derived} we require $q>b^2$. In fact, (\ref{derivedSet}) may fail if there are no restrictions on the relationship between $b$ and $q$. For instance, if $b=6$ and $q=7$, then a simulation shows
$$
\lim_{p\rightarrow\infty, p\in\mathbb{P}}\frac{6^{7p}\pmod{7p}}{7p}=0,
$$
i.e. $\mathcal{S}'_{6,7}=\{0\}$. As a consequence of Theorem \ref{structureThm}, we have the following dichotomy.
\begin{prop}
    \label{dichotomy}
    Let $b\in\mathbb{N}, b\ge2$ and and $q\in\mathbb{P}$.
    \begin{enumerate}[label=(\roman*)]
        \item \label{firstitem} If $b^2\not\equiv 1\pmod{q}$, then (\ref{derivedSet}) holds.
        \item \label{seconditem} If $b^2\equiv 1\pmod{q}$, then $\mathcal{S}_{b,q}'=\{0\}$.
    \end{enumerate}
\end{prop}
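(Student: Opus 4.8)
The plan is to reduce Proposition \ref{dichotomy} to a clean criterion for which of the solution sets $Z_k$ of (\ref{solSets}) are nonempty, and then to settle that criterion by an explicit construction, so that neither the full strength of Theorem \ref{structureThm} nor the hypothesis $q>b^2$ of Theorem \ref{derived} is needed. Throughout I assume $\gcd(b,q)=1$; this is the case in which the congruence (\ref{solutionEq}) carries information, since if $q\mid b$ then $b^x\equiv0\pmod q$, (\ref{solutionEq}) reads $kx\equiv0\pmod q$, and one gets $Z_k=\emptyset$ for $k\neq0$ and $\mathcal{S}_{b,q}'=\{0\}$ at once.

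First I would isolate the exact input used in the proof of Theorem \ref{derived}. Revisiting that argument --- split $\mathbb{Z}[qp]\cong\mathbb{Z}[p]\times\mathbb{Z}[q]$ by the Chinese Remainder Theorem, use Fermat to get $b^{qp}\equiv b^q\pmod p$ and $b^{qp}\equiv b^p\pmod q$, and observe that for $p>b^q$ the residue $b^{qp}\bmod qp$ equals $b^q+pj$ with a band index $j=j(p)\in\{0,\dots,q-1\}$ --- one sees that $(b^{qp}\bmod qp)/qp=b^q/(qp)+j(p)/q\to j(p)/q$, and (using $b^q\equiv b\pmod q$ to recognize (\ref{solutionEq})) that the band of $p$ is the unique $j$ with $p\bmod q(q-1)\in Z_j$. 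Since the candidate limits $j/q$ are $1/q$-separated, every limit point is some $j/q$, and $j/q$ is a limit point iff some residue class in $Z_j$ contains infinitely many primes, i.e. (by Dirichlet) iff $Z_j\neq\emptyset$. Hence, for every prime $q\nmid b$, with no restriction $q>b^2$,
\begin{equation}
\nonumber
\mathcal{S}_{b,q}'=\left\{\frac{j}{q}:0\le j\le q-1,\ Z_j\neq\emptyset\right\}.
\end{equation}
Note $Z_0\neq\emptyset$ unconditionally, since $x\equiv1\pmod{q(q-1)}$ solves $b^x\equiv b\pmod q$. The proposition is now equivalent to the assertion that $Z_j\neq\emptyset$ for every $j$ when $b^2\not\equiv1\pmod q$, whereas $b^2\equiv1\pmod q$ forces $Z_j=\emptyset$ for all $j\neq0$.

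For part \ref{seconditem} I would argue directly. If $b^2\equiv1\pmod q$ then $b\equiv\pm1\pmod q$, and since $q(q-1)$ is even, every $x\in\mathbb{Z}^*[q(q-1)]$ is odd, so $b^x\equiv(\pm1)^x=\pm1\equiv b\pmod q$. Thus (\ref{solutionEq}) collapses to $jx\equiv0\pmod q$; as $x$ is a unit mod $q$ this forces $j=0$, so $Z_j=\emptyset$ for $j\neq0$ and $\mathcal{S}_{b,q}'=\{0\}$. For part \ref{firstitem}, fix $j\in\{1,\dots,q-1\}$; here necessarily $q\ge3$, for if $q=2$ then coprimality forces $b$ odd and $b^2\equiv1\pmod2$, contrary to the hypothesis. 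I would prescribe $x$ through $\mathbb{Z}[q(q-1)]\cong\mathbb{Z}[q-1]\times\mathbb{Z}[q]$, the key idea being to take $x\equiv-1\pmod{q-1}$: this is a unit mod $q-1$ because $\gcd(q-2,q-1)=1$, and it yields $b^x\equiv b^{-1}\pmod q$. Then (\ref{solutionEq}) reads $b^{-1}\equiv jx+b\pmod q$, i.e. $jx\equiv b^{-1}-b\pmod q$, which (as $j$ is a unit mod $q$) has the unique solution $x\equiv j^{-1}(b^{-1}-b)\pmod q$; this value is nonzero precisely because $b^{-1}\not\equiv b$, equivalently $b^2\not\equiv1\pmod q$. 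Choosing $x$ with these two residues gives $x\in\mathbb{Z}^*[q(q-1)]$ solving (\ref{solutionEq}), so $Z_j\neq\emptyset$, and with $Z_0\neq\emptyset$ we obtain (\ref{derivedSet}).

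The main obstacle, and the one genuinely new point beyond Theorem \ref{derived}, is pinning down the emptiness of $Z_j$ for general $q$: the whole dichotomy hinges on the elementary observation that the residue $-1$ is always a unit mod $q-1$, which turns (\ref{solutionEq}) into a single linear congruence whose solvability among the units mod $q$ is governed exactly by whether $b\equiv b^{-1}$. This emptiness information is of course also contained in Theorem \ref{structureThm}, which describes each $Z_k$ in full; the argument above simply extracts the part relevant here.
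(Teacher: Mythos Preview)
Your argument is correct. Both you and the paper hinge part \ref{firstitem} on the single element $x\equiv -1\pmod{q-1}$ (the paper writes this as $\alpha=q-2$), and both handle part \ref{seconditem} via the observation that units modulo the even number $q(q-1)$ are odd. The difference is in packaging: the paper deduces Proposition~\ref{dichotomy} from Theorem~\ref{structureThm}, reading off that $\mathbb{Z}^*[q-1]\setminus\mathcal{N}$ is empty precisely when $\delta\le2$, whereas you bypass Theorem~\ref{structureThm} entirely and instead rerun the explicit construction of Lemma~\ref{MainLemma} under the weaker hypothesis $b^2\not\equiv1\pmod q$ (noting, as the paper's proof of Lemma~\ref{MainLemma} does implicitly, that only $\delta>2$ is needed there, not $q>b^2$). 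Your route is self-contained and makes the reduction $\mathcal{S}_{b,q}'=\{j/q:Z_j\neq\emptyset\}$ explicit; the paper's route is shorter once Theorem~\ref{structureThm} is available. You also correctly flag the edge case $q\mid b$: there $b^2\equiv0\not\equiv1\pmod q$ yet $\mathcal{S}_{b,q}'=\{0\}$, so the proposition as stated tacitly assumes $\gcd(b,q)=1$, an assumption the paper's own proof uses without comment when it invokes the multiplicative order $\delta=|\langle b\rangle|$.
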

Note that if $q>b^2$, then automatically $b^2\not\equiv 1\pmod{q}$. Therefore, Proposition \ref{dichotomy} \ref{firstitem} agrees with Theorem \ref{derived}. 

Theorem \ref{structureThm} also provides an immediate way to calculate the number of elements in $Z_k~~(k=0,1,\ldots, q-1)$. In contrast, Lemma \ref{MainLemma} only allows us to conclude $Z_k\neq\emptyset$.
Here and below, $|A|$ denotes the cardinality of the finite set $A$. 
\begin{prop} 
\label{countProp}
Let $b\in\mathbb{N}, b\ge2$ and $q\in\mathbb{P}$. Then
\begin{equation}
    \label{card1}
    |Z_0|=(q-1)m_b(q),
\end{equation}
and
\begin{equation}
    \label{card2}
    |Z_k|=\varphi(q-1)-m_b(q)\quad (k=1,2,\ldots, q-1),
\end{equation}
where $\varphi$ denotes Euler's totient function and $m_b$ is a function defined by (\ref{mbDef}) below. 
\end{prop}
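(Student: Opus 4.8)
The plan is to exploit the coprimality of $q$ and $q-1$ to decouple the congruence (\ref{solutionEq}) into independent conditions on two ``coordinates'' of $x$. Since $\gcd(q,q-1)=1$, the Chinese Remainder Theorem gives a ring isomorphism $\mathbb{Z}[q(q-1)]\cong\mathbb{Z}[q]\times\mathbb{Z}[q-1]$ that restricts to a bijection $\mathbb{Z}^*[q(q-1)]\cong\mathbb{Z}^*[q]\times\mathbb{Z}^*[q-1]$, sending a class $x$ to the pair $(a,c)$ with $a=x\bmod q$ and $c=x\bmod(q-1)$. Under this identification $a$ ranges over $\mathbb{Z}^*[q]$ (which has $\varphi(q)=q-1$ elements) and $c$ ranges over $\mathbb{Z}^*[q-1]$ (which has $\varphi(q-1)$ elements), independently. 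Note also that any $x\in\mathbb{Z}^*[q(q-1)]$ is in particular coprime to $q$, so $a\not\equiv 0\pmod q$.

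Next I would observe that each side of (\ref{solutionEq}) depends on only one coordinate. Assuming first $q\nmid b$, Fermat's little theorem gives $b^{q-1}\equiv1\pmod q$, so changing the integer representative of $x$ alters the exponent by a multiple of $q(q-1)$, hence of $q-1$, leaving $b^x\bmod q$ unchanged; thus $b^x\equiv b^{c}\pmod q$ depends only on $c$, while $kx+b\equiv ka+b\pmod q$ depends only on $a$. Consequently (\ref{solutionEq}) is equivalent to
\begin{equation}
\nonumber
b^{c}\equiv ka+b\pmod q,\qquad a\in\mathbb{Z}^*[q],\ c\in\mathbb{Z}^*[q-1],
\end{equation}
and $|Z_k|$ equals the number of admissible pairs $(a,c)$.

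Now the counting splits according to whether $k$ vanishes. For $k=0$ the condition reads $b^{c}\equiv b\pmod q$, a constraint on $c$ alone; writing $m_b(q)$ for the number of $c\in\mathbb{Z}^*[q-1]$ satisfying it — which is precisely the quantity recorded in (\ref{mbDef}) — each of the $q-1$ choices of $a$ is then admissible, giving $|Z_0|=(q-1)m_b(q)$, i.e.\ (\ref{card1}). For $k\in\{1,\dots,q-1\}$ the coefficient $k$ is invertible modulo the prime $q$, so for each $c\in\mathbb{Z}^*[q-1]$ there is a unique $a\equiv k^{-1}(b^{c}-b)\pmod q$ solving the congruence, and this $a$ lies in $\mathbb{Z}^*[q]$ (i.e.\ $a\not\equiv0$) exactly when $b^{c}\not\equiv b\pmod q$. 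Discarding the $m_b(q)$ values of $c$ for which $a\equiv0$ leaves $\varphi(q-1)-m_b(q)$ admissible pairs, which is (\ref{card2}). A useful consistency check is that the $Z_k$ partition $\mathbb{Z}^*[q(q-1)]$ (the determined value $k\equiv a^{-1}(b^{c}-b)$ is unique for each $x$), and indeed $(q-1)m_b(q)+(q-1)\bigl(\varphi(q-1)-m_b(q)\bigr)=(q-1)\varphi(q-1)=\varphi\bigl(q(q-1)\bigr)$.

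The only genuine obstacle is the degenerate case $q\mid b$, where Fermat's theorem is unavailable. There $b\equiv0\pmod q$ forces $b^{c}\equiv0\equiv b\pmod q$ for every $c$, so $m_b(q)=\varphi(q-1)$, and one checks directly that the same formulas persist: $|Z_0|=(q-1)\varphi(q-1)=\varphi(q(q-1))$ and $|Z_k|=0$ for $k\neq0$. With Theorem \ref{structureThm} available, the identification of $m_b(q)$ with the count in (\ref{mbDef}) — equivalently, in the case $q\nmid b$, the number of $c\in\mathbb{Z}^*[q-1]$ congruent to $1$ modulo the multiplicative order of $b$ in $\mathbb{Z}^*[q]$ — is immediate, and the argument above is simply the bookkeeping converting that structural description into the cardinalities (\ref{card1}) and (\ref{card2}).
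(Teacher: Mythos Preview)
Your argument is correct. The CRT decomposition $x\leftrightarrow(a,c)$ with $a=x\bmod q$ and $c=x\bmod(q-1)$ is exactly the content of the paper's Lemma~\ref{SimpleLemma0} (their $\alpha$ is your $c$, their $\beta$ is $-a\bmod q$), and your identification of $m_b(q)$ with the number of $c\in\mathbb{Z}^*[q-1]$ satisfying $b^c\equiv b\pmod q$ matches~(\ref{mbDef}) since $b^c\equiv b$ iff $c\equiv1\pmod\delta$.

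The route differs slightly from the paper's. The paper first invokes Theorem~\ref{structureThm} to get a bijection $\mathbb{Z}^*[q-1]\setminus\mathcal{N}\to Z_k$ for $k\neq0$, reads off (\ref{card2}), and then deduces (\ref{card1}) indirectly from the partition $\bigsqcup_k Z_k=\mathbb{Z}^*[q(q-1)]$. You instead count both cases directly from the pair description, obtaining (\ref{card1}) without passing through (\ref{card2}) or the partition, and without needing the full structural statement of Theorem~\ref{structureThm}; your partition computation is then only a consistency check. This is marginally more self-contained. Your explicit treatment of the degenerate case $q\mid b$ is also a small bonus: the paper's definition of $\delta$ tacitly assumes $b\in\mathbb{Z}^*[q]$, so strictly speaking $m_b(q)$ is not defined there when $q\mid b$, whereas your reading of $m_b(q)$ as $|\{c\in\mathbb{Z}^*[q-1]:b^c\equiv b\}|$ extends naturally and gives the expected answers.
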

\begin{rem}
We mention that if $b^2\equiv1\pmod{q}$, then the right-hand side of (\ref{card2}) is 0 for every $k=1,2,\ldots,q-1$ and the right-hand side of (\ref{card1}) simplifies to $(q-1)\varphi(q-1)=\varphi(q(q-1))=|\mathbb{Z}^*[q(q-1)]|$. This agrees with Proposition \ref{dichotomy} above.
\end{rem}

\section{Proof of Theorem \ref{derived}}

We shall need the following simple lemma.
\begin{lem} Let $N\in\mathbb{N}~~(N\ge3)$.
    \label{SimpleLemma0}
    \begin{enumerate}[label=(\roman*)]
        \item\label{SimpleLemma}
        If $\alpha\in\mathbb{Z}^*[N-1]$ and $\beta\in\mathbb{Z}^*[N]$, and
        \begin{equation}
            \label{repr}
            x=\alpha N+\beta(N-1)\pmod{N(N-1)},
        \end{equation}
        then $x\in\mathbb{Z}^*[N(N-1)]$.
        \item \label{SimpleLemmaConv}
        Conversely, for any $x\in\mathbb{Z}^*[N(N-1)]$ there exist unique $\alpha\in\mathbb{Z}^*[N-1]$ and $\beta\in\mathbb{Z}^*[N]$ such that (\ref{repr}) holds.
    \end{enumerate}
\end{lem}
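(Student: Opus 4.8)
The plan is to recognize both parts as an instance of the Chinese Remainder Theorem, exploiting that $\gcd(N,N-1)=1$ so that reduction modulo $N(N-1)$ is completely determined by reduction modulo $N$ and modulo $N-1$ separately. The two elementary congruences $N\equiv 1\pmod{N-1}$ and $N-1\equiv -1\pmod{N}$ are what make the representation (\ref{repr}) transparent, and they do all the work. Concretely, I would first reduce the right-hand side of (\ref{repr}) modulo each factor: modulo $N-1$ the term $\beta(N-1)$ vanishes and $N\equiv 1$, giving $x\equiv\alpha\pmod{N-1}$; modulo $N$ the term $\alpha N$ vanishes and $N-1\equiv -1$, giving $x\equiv -\beta\pmod{N}$. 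These two identities are the engine of the whole argument.

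For part \ref{SimpleLemma}, I would then observe that $\gcd(x,N-1)=\gcd(\alpha,N-1)=1$ because $\alpha\in\mathbb{Z}^*[N-1]$, and $\gcd(x,N)=\gcd(\beta,N)=1$ because $\beta\in\mathbb{Z}^*[N]$; since $N$ and $N-1$ are coprime, these combine to give $\gcd\bigl(x,N(N-1)\bigr)=1$, i.e. $x\in\mathbb{Z}^*[N(N-1)]$. For the converse, part \ref{SimpleLemmaConv}, I would start from $x\in\mathbb{Z}^*[N(N-1)]$ and define $\alpha:=x\bmod(N-1)$ and $\beta:=(-x)\bmod N$. Coprimality of $x$ with $N(N-1)$ forces $\alpha\in\mathbb{Z}^*[N-1]$ and $\beta\in\mathbb{Z}^*[N]$, and the reduction identities above show that $\alpha N+\beta(N-1)$ agrees with $x$ both modulo $N-1$ and modulo $N$, hence modulo $N(N-1)$ by the Chinese Remainder Theorem; this establishes existence. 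Uniqueness is immediate from the same identities: any admissible pair must satisfy $\alpha\equiv x\pmod{N-1}$ and $\beta\equiv -x\pmod{N}$, which pins down $\alpha$ in $\mathbb{Z}[N-1]$ and $\beta$ in $\mathbb{Z}[N]$ uniquely.

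There is essentially no genuine obstacle here beyond careful bookkeeping; the lemma is a packaging of CRT tailored to the moduli $N$ and $N-1$. The one point demanding a little attention is the sign in $x\equiv -\beta\pmod{N}$, which must be tracked consistently so that the definition of $\beta$ in the converse direction uses $-x$ rather than $x$.
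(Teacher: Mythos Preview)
Your argument is correct and, at bottom, rests on the same fact as the paper's proof: since $\gcd(N,N-1)=1$, coprimality with $N(N-1)$ is equivalent to coprimality with each factor separately. The packaging, however, is different. You compute the two reductions $x\equiv\alpha\pmod{N-1}$ and $x\equiv-\beta\pmod{N}$ once and for all and then invoke the Chinese Remainder Theorem; this makes both existence and uniqueness in part~\ref{SimpleLemmaConv} essentially automatic, and part~\ref{SimpleLemma} a one-line gcd check. The paper instead argues part~\ref{SimpleLemma} by taking a (prime) divisor $d$ of $N(N-1)$ and showing directly that $d\nmid x$, and for part~\ref{SimpleLemmaConv} it builds $\beta=N-x\pmod{N}$ and then $\alpha=(x+\beta)/N-\beta\pmod{N-1}$ by hand, verifying the required coprimality and uniqueness via explicit manipulations. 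Your route is shorter and more transparent; the paper's is more self-contained in that it never names CRT. Either way the content is the same, and your careful tracking of the sign in $x\equiv-\beta\pmod{N}$ is exactly the point that needs attention.
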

\begin{rem}
    To avoid confusion, we make the following remark on notation. As usual, $x\equiv y\pmod{n}$ simply means $n|(x-y)$. When we write $x=y\pmod{n}$, we mean that $x\in\mathbb{Z}[n]$ is the residue of $y$ modulo $n$. 
\end{rem}
\begin{proof}[Proof of Lemma \ref{SimpleLemma0}]
    We first prove statement \ref{SimpleLemma}.
    Let $d$ be an arbitrary divisor of $N(N-1)$. Since $\gcd(N,N-1)=1$ either $d|N$ or $d|(N-1)$. Without loss of generality, we may assume that $d$ is a divisor of $N$. Then $d|(\alpha N+nN(N-1))$ for any $a\in\mathbb{Z}^*[N-1]$ and $n\in\mathbb{Z}$. On the other hand, $d$ does not divide $N-1$, and since $\gcd(\beta,N)=1$ it follows that $d$ does not divide $\beta(N-1)$ for any $\beta\in\mathbb{Z}^*[N]$. Hence, $d$ does not divide $\alpha N+\beta(N-1)+nN(N-1)$. To summarize, any divisor of $N(N-1)$ does not divide $x=\alpha N+\beta(N-1)\pmod{N(N-1)}$, this proves statement \ref{SimpleLemma}.

    We proceed to prove statement \ref{SimpleLemmaConv}
    Take $x\in\mathbb{Z}^*[N(N-1)]$ and define $\beta= N-x\pmod{N}$. Since $\gcd(x,N)=1$ it is easy to see that $\gcd(\beta,N)=1$, i.e., $\beta\in\mathbb{Z}^*[N]$. Furthermore, $N|(x+\beta)$ so $\alpha=(x+\beta)/N-\beta\pmod{N-1}$ is well-defined.
    Thus, there exists $n\in\mathbb{N}$ such that $N\alpha=x-\beta(N-1)+nN(N-1)$. Since $\gcd(x,N-1)=1$, it follows that $\gcd(N\alpha, N-1)=1$. Therefore, $\gcd(\alpha,N-1)=1$. Further, by definition $x=\alpha N+\beta(N-1)\pmod{N(N-1)}$, proving the existence of the representation (\ref{repr}).
    To prove uniqueness of $\alpha,\beta$, assume that 
    $$
        x=\alpha' N+\beta'(N-1)=\alpha'' N+\beta''(N-1)\pmod{N(N-1)}
    $$
    for $\alpha',\alpha''\in\mathbb{Z}^*[N-1]$ and $\beta',\beta''\in\mathbb{Z}^*[N]$.
    It follows that there exists $n\in\mathbb{Z}$ such that 
    $$
        (\alpha'-\alpha'')N+(\beta'-\beta'')(N-1)+nN(N-1)=0
    $$
    Rearranging the above equality, we get that $(N-1)|(\alpha'-\alpha'')N$.
    Since $|\alpha'-\alpha''|<N-1$ and $\gcd(N,N-1)=1$, the only possibility is $\alpha'=\alpha''$. A similar argument shows that $\beta'=\beta''$.
\end{proof}

\begin{lem}
\label{MainLemma}
Let $b\in\mathbb{N}, b\ge2$ and $q\in\mathbb{P}$ satisfies $q>b^2$. Let the sets $Z_k$ be defined by (\ref{solSets}), then
\begin{equation}
    \label{Nonempty}
    Z_k\neq\emptyset\quad\text{for each}\quad k\in\mathbb{Z}[q],
\end{equation}
and
\begin{equation}
    \label{Disjoint}
    \bigsqcup_{k=0}^{q-1}Z_k=\mathbb{Z}^*[q(q-1)].
\end{equation}
(Here, $\sqcup$ denotes disjoint union.)
\end{lem}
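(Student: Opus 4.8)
The plan is to dispatch the partition identity (\ref{Disjoint}) first, since it is the easier of the two claims, and then to build explicit elements of each $Z_k$ using the parametrization furnished by Lemma \ref{SimpleLemma0}. For (\ref{Disjoint}), I would start from the observation that every $x\in\mathbb{Z}^*[q(q-1)]$ satisfies $\gcd(x,q)=1$, so $x$ is invertible modulo $q$. Rewriting the defining congruence (\ref{solutionEq}) as $kx\equiv b^x-b\pmod q$ and multiplying by $x^{-1}$ shows that a given $x$ satisfies (\ref{solutionEq}) for exactly one residue $k\equiv(b^x-b)x^{-1}\pmod q$ in $\{0,1,\ldots,q-1\}$. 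Hence each $x$ lies in precisely one $Z_k$, which simultaneously yields disjointness of the $Z_k$ and the fact that their union is all of $\mathbb{Z}^*[q(q-1)]$.

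For the nonemptiness (\ref{Nonempty}), I would invoke Lemma \ref{SimpleLemma0}\ref{SimpleLemmaConv} with $N=q$ to write a prospective solution as $x=\alpha q+\beta(q-1)\pmod{q(q-1)}$ with $\alpha\in\mathbb{Z}^*[q-1]$ and $\beta\in\mathbb{Z}^*[q]$; by Lemma \ref{SimpleLemma0}\ref{SimpleLemma}, any such $x$ automatically lies in $\mathbb{Z}^*[q(q-1)]$, so all coprimality bookkeeping is handled for free. Reducing modulo $q-1$ and modulo $q$ gives $x\equiv\alpha\pmod{q-1}$ and $x\equiv-\beta\pmod q$; since $q>b^2$ forces $\gcd(b,q)=1$, Fermat's little theorem yields $b^x\equiv b^\alpha\pmod q$, and the congruence (\ref{solutionEq}) becomes
\begin{equation}
    \nonumber
    k\beta\equiv b-b^\alpha\pmod q.
\end{equation}
It therefore suffices to choose admissible $\alpha,\beta$ realizing each target value of $k$.

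The crucial step --- and the place where the hypothesis $q>b^2$ enters --- is the choice of $\alpha$. For $k=0$ I would take $\alpha=1$ (which lies in $\mathbb{Z}^*[q-1]$ since $q\ge3$), forcing $b-b^\alpha=0$ and leaving $\beta$ arbitrary. For $k\in\{1,\ldots,q-1\}$ I would instead take $\alpha\equiv-1\pmod{q-1}$, i.e. $\alpha=q-2$; this is coprime to $q-1$ because $\gcd(q-2,q-1)=1$, and it gives $b^\alpha\equiv b^{-1}\pmod q$, so that $b-b^\alpha\equiv b-b^{-1}\pmod q$. Here $1<b^2<q$ guarantees $b^2\not\equiv1\pmod q$, whence $b-b^{-1}\not\equiv0\pmod q$; one can then set $\beta\equiv(b-b^{-1})k^{-1}\pmod q$, which is nonzero and hence a genuine element of $\mathbb{Z}^*[q]$, completing the construction. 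The main obstacle is exactly the production of an exponent $\alpha\in\mathbb{Z}^*[q-1]$ with $b^\alpha\not\equiv b\pmod q$, and the choice $\alpha\equiv-1$ reduces this to the single clean inequality $b^2\not\equiv1\pmod q$, which is precisely what $q>b^2$ supplies.
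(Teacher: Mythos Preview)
Your proposal is correct and follows essentially the same route as the paper. Both arguments establish (\ref{Disjoint}) by noting that $x$ is a unit modulo $q$ and that $k\equiv(b^x-b)x^{-1}\pmod q$ is forced, and both establish (\ref{Nonempty}) via the parametrization $x=\alpha q+\beta(q-1)$ with the choice $\alpha=q-2$ for $k\neq0$, reducing the problem to $b^{q-2}\not\equiv b\pmod q$, i.e.\ $b^2\not\equiv1\pmod q$, which is exactly where $q>b^2$ enters; your version simply solves $k\beta\equiv b-b^{-1}$ directly, whereas the paper phrases the same step through the image set $S=\{b-k\beta:\beta\in\mathbb{Z}^*[q]\}=\mathbb{Z}[q]\setminus\{b\}$ and the order $\delta$ of $b$.
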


\begin{proof}
    We prove (\ref{Nonempty}) first.
    Note that if $k=0$, then we may take $x=2q-1\in\mathbb{Z}[q(q-1)]$ (by Lemma \ref{SimpleLemma0} \ref{SimpleLemma}). Further, by Fermat's little theorem $b^x\equiv b\pmod{q}$ while $b-0\times x=b$, i.e., $x\in Z_0$.
    
    Fix now $k\in\mathbb{Z}[q]\setminus\{0\}$. Since $q-2\in\mathbb{Z}^*[q-1]$, it follows from Lemma \ref{SimpleLemma0} \ref{SimpleLemma} that
    \begin{equation}
        \label{solDef0}
        x=q(q-2)+\beta(q-1)\pmod{q(q-1)}
    \end{equation}
    belongs to $\mathbb{Z}^*[q(q-1)]$ for any $\beta\in\mathbb{Z}^*[q]$. We shall determine a specific $\beta\in\mathbb{Z}^*[q]$ such that $x\in Z_k$.
    Note that
     \begin{equation}
         \label{MainRelation}
         b^x\equiv b^{q-2}\pmod{q}\quad\text{and}\quad b+kx\equiv b-k\beta\pmod{q}.
     \end{equation}
    For the first relation of (\ref{MainRelation}), we used Fermat's little theorem to obtain that
    $$
        b^{q(q-2)+\beta(q-1)+nq(q-1)}\equiv b^{q-2}\pmod{q}.
    $$
    (Note that this also holds when $n<0$, since $b^{-1}\equiv b^{q-2}\pmod{q}$.)

    Denote by 
    $$
        S=\{b-k\beta\pmod{q}:\beta\in\mathbb{Z}^*[q]\}.
    $$
    We shall prove that $S=\mathbb{Z}[q]\setminus\{b\}$. Clearly, $S\subseteq\mathbb{Z}[q]$. Furthermore, $|S|=q-1$. Indeed, let $\beta',\beta''\in\mathbb{Z}^*[q]$ and assume $b-k\beta'\equiv b-k\beta''\pmod{q}$. The latter congruence implies $k(\beta'-\beta'')\equiv 0\pmod{q}$ which is only true if $\beta'=\beta''$, since $q$ is prime. By the same token, $b\notin S$, since $b\in S$ would imply $k\beta\equiv 0\pmod{q}$, which is impossible since $\beta\in\mathbb{Z}^*[q]$ and $k\neq0$. Consequently, $S=\mathbb{Z}[q]\setminus\{b\}$. 
    
    Consider now $\langle b\rangle$, the cyclic subgroup of $\mathbb{Z}^*[q]$ generated by $b$. Denote $\delta=|\langle b\rangle|$. Since $S=\mathbb{Z}[q]\setminus\{b\}$, we have
    \begin{equation}
        \label{SetEq}
          S\cap\langle b\rangle=\langle b\rangle\setminus\{b\}=\{b^l\pmod{q}:l=0,2,3\ldots,\delta-1\}.
    \end{equation}
    By Lagrange's theorem, $q-1=N\delta$ for some $N\in\mathbb{N}$. 
    Therefore
    $$
        b^{q-2}=b^{\delta-1+(N-1)\delta}\equiv b^{\delta-1}\pmod{q}.
    $$
    Since $\delta>2$ we have $\delta-1>1$.
    Therefore, by (\ref{SetEq}), there is a unique $\beta\in\mathbb{Z}^*[q]$ defined by 
    \begin{equation}
        \label{betaDef}
        b-k\beta\equiv b^{\delta-1}\pmod{q}
    \end{equation}
    Clearly $x$ defined by (\ref{solDef0}) with $\beta$ given by (\ref{betaDef}) satisfies $x\in Z_k$. This concludes the proof of (\ref{Nonempty}).
    
    We continue to prove (\ref{Disjoint}). Assume that $x\in\mathbb{Z}^*[q(q-1)]$ satisfies both $b^x\equiv kx+b\pmod{q}$ and $b^x\equiv lx+b\pmod{q}$. Then $(k-l)x\equiv 0\pmod{q}$. Since $x\in\mathbb{Z}^*[q(q-1)]$ the residue $x\pmod{q}\neq 0$ and the only possibility is $k-l\equiv 0\pmod{q}$, whence $k=l$.  This shows $Z_k\cap Z_l=\emptyset$ for $k\neq l$. Take now $x\in\mathbb{Z}^*[q(q-1)]$. We shall prove that there is $k\in\mathbb{Z}[q]$ such that $x\in Z_k$. Since $\gcd(x,q)=1$ there are integers $A,B$ such that $Ax+Bq=1$. Therefore, $b^x-b=(b^x-b)A x+(b^x-b)B q$ and we set $k=k(x)=(b^x-b)A\pmod{q}$. Clearly $b^x-b\equiv kx\pmod{q}$, or equivalently, $x\in Z_k$
\end{proof}

We are now in a position to prove Theorem \ref{derived}.
\begin{proof}[Proof of Theorem \ref{derived}]
Recall that $\mathbb{P}$ denotes the set of prime numbers.
Fix $k\in\mathbb{Z}[q]$, we shall show that there exists a sequence $\{p_n\}_{n=1}^\infty\subset\mathbb{P}$ such that
\begin{equation}
    \label{limit}
    \lim_{n\rightarrow\infty}\frac{b^{qp_n}\pmod{qp_n}}{qp_n}=\frac{k}{q}.
\end{equation}
For any $p\in\mathbb{P}$ we consider the quantity
$$
f_{q,k}(p)=b^{qp}-b^q-kp.
$$
By Fermat's little theorem, we have $f_{q,k}(p)\equiv 0\pmod{p}$ for all $p\in\mathbb{P}$.
By Lemma \ref{MainLemma}, there exists $x\in\mathbb{Z}^*[q(q-1)]$ such that
$$
b^x-kx-b\equiv 0\pmod{q}
$$
Assume that $p\in\mathbb{P}$ satisfies $p\equiv x\pmod{q(q-1)}$, i.e., $p=Nq(q-1)+x$ for some $N\in\mathbb{N}$. By a repeated application of Fermat's little theorem, we obtain
\begin{eqnarray}
    \nonumber
    f_{q,k}(p)&=&b^{Nq^2(q-1)+qx}-b^{q}-k(Nq(q-1)+x)\\
    \nonumber
    &=&(b^{q-1})^{q^2N}\times(b^q)^x-b^q-q(kN(q-1))-kx\\
    \nonumber
    &\equiv&b^x-b-kx\pmod{q}
\end{eqnarray}
Hence, for any $p\in\mathbb{P}$ that satisfies $p\equiv x\pmod{q(q-1)}$ we have 
$$
    f_{q,k}(p)\equiv 0\pmod{p}\quad\text{and}\quad f_{q,k}(p)\equiv 0\pmod{q}.
$$
Since $p,q\in\mathbb{P}$, it follows that
$f_{q,k}(p)\equiv 0\pmod{qp}$. In other words, for any $q\in\mathbb{P}$ and $p\in\mathbb{P}$ with $p\equiv x\pmod{q(q-1)}$, we have
$$
b^{qp}=Mqp+b^q+kp
$$
for some $M\in\mathbb{N}$.
By Dirichlet's theorem on primes in arithmetic progressions \cite[Chapter 7]{A}, for any $x\in\mathbb{Z}^*[q(q-1)]$ there are infinitely many $p\in\mathbb{P}$ with $p\equiv x\pmod{q(q-1)}$. Take a sequence $\{p_n\}$ of such primes sufficiently big to have $b^q+kp_n<qp_n$ for every $n\in\mathbb{N}$. Then
$$
b^{qp_n}\pmod{qp_n}=b^q+kp_n
$$
Thus,
$$
\frac{b^{qp_n}\pmod{qp_n}}{qp_n}=\frac{b^q}{p_n}+\frac{k}{q},
$$
Whence, (\ref{limit}) follows.

We proceed to show that no other limit points exist. Taken any sequence $\{p_n\}_{n=1}^\infty\subset\mathbb{P}$ and assume that 
$$
\lim_{n\rightarrow\infty}\frac{b^{qp_n}\pmod{qp_n}}{qp_n}=l.
$$
Clearly, there is $x\in\mathbb{Z}^*[q(q-1)]$ such that an infinite number of elements from $\{p_n\}_{n=1}^\infty$ satisfy 
\begin{equation}
    \label{subseq}
    p_n\equiv x\pmod{q(q-1)}.
\end{equation}
Let $\{p_{n_j}\}_{j=1}^\infty$ be the sub-sequence consisting of all $p_n$ that satisfies (\ref{subseq}). Then 
$$
\lim_{j\rightarrow\infty}\frac{b^{qp_{n_j}}\pmod{qp_{n_j}}}{qp_{n_j}}=\frac{k}{q}
$$
where $k$ is the unique index such that $x\in Z_k$ (such $k$ exists due to (\ref{Disjoint})). By uniqueness of limit, we have $l=k/q$.
\end{proof}

\section{Structure of the solution sets (\ref{solSets})}
As in the proof of Lemma \ref{MainLemma}, we let $\delta=|\langle b\rangle|$ where $\langle 
 b\rangle$ is the cyclic subgroup of $\mathbb{Z}^*[q]$ generated by $b$. 
Define
\begin{equation}
    \nonumber
    \mathcal{N}=\{\alpha\in\mathbb{Z}^*[q-1]: \alpha\equiv 1\pmod{\delta}\}.
\end{equation}
\begin{theorem}
    \label{structureThm}
    The following holds.
    \begin{enumerate}
        \item For $k=0$, we have
        \begin{equation}
        \label{structureZ0}
            Z_0=\{\alpha q+(q-1)\beta\pmod{q(q-1)}:\alpha\in\mathcal{N}, \beta\in\mathbb{Z}^*[q]\}
        \end{equation}
        \item For each $\alpha\in\mathbb{Z}^*[q-1]$ and $k\in\{1,2,\ldots,q-1\}$ let $\beta_k(\alpha)$ be the unique solution in $\mathbb{Z}^*[q]$  to the equation
        \begin{equation}
            \label{betaEq}
            b-k\beta\equiv b^{\alpha\pmod{\delta}}\pmod{q}.
        \end{equation}
        Then
        \begin{equation}
            \label{structureZk}
            Z_k=\{x(\alpha):\alpha\in\mathbb{Z}^*[q-1]\setminus\mathcal{N}\}.
        \end{equation}
        where
        \begin{equation}
            \label{solDef}
            x(\alpha)=\alpha q+\beta_k(\alpha)(q-1)\pmod{q(q-1)}
        \end{equation}
    \end{enumerate}
\end{theorem}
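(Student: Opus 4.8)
The plan is to exploit the unique parametrization furnished by Lemma \ref{SimpleLemma0} with $N=q$. That lemma says every $x\in\mathbb{Z}^*[q(q-1)]$ can be written uniquely as
$$
x=\alpha q+\beta(q-1)\pmod{q(q-1)},\qquad \alpha\in\mathbb{Z}^*[q-1],\ \beta\in\mathbb{Z}^*[q],
$$
so that $x\mapsto(\alpha,\beta)$ is a bijection from $\mathbb{Z}^*[q(q-1)]$ onto $\mathbb{Z}^*[q-1]\times\mathbb{Z}^*[q]$. The whole proof then consists of translating the defining congruence $b^x\equiv kx+b\pmod q$ of $Z_k$ into an equivalent condition on the pair $(\alpha,\beta)$ and reading off which pairs qualify.

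Next I would reduce each side of the congruence separately, exactly as in the proof of Lemma \ref{MainLemma}. Since $q\equiv1\pmod{q-1}$ we have $x\equiv\alpha\pmod{q-1}$, so Fermat's little theorem gives $b^x\equiv b^{\alpha}\equiv b^{\alpha\pmod{\delta}}\pmod q$, the last step using $b^\delta\equiv1\pmod q$. Since $q-1\equiv-1\pmod q$ we have $x\equiv-\beta\pmod q$, hence $kx+b\equiv b-k\beta\pmod q$. Therefore $x\in Z_k$ holds if and only if
$$
b^{\alpha\pmod{\delta}}\equiv b-k\beta\pmod q,
$$
which is precisely equation (\ref{betaEq}). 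This identity is the heart of the matter; everything afterwards is a short case analysis.

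For $k=0$ the condition becomes $b^{\alpha\pmod{\delta}}\equiv b\pmod q$. As the powers $b^0,\dots,b^{\delta-1}$ are distinct in $\langle b\rangle$, this is equivalent to $\alpha\equiv1\pmod{\delta}$, i.e.\ $\alpha\in\mathcal{N}$, while $\beta$ is unconstrained; via the bijection this yields (\ref{structureZ0}). For $k\in\{1,\dots,q-1\}$ the element $k$ is invertible modulo the prime $q$, so for each fixed $\alpha$ the congruence determines $\beta\equiv k^{-1}\!\left(b-b^{\alpha\pmod{\delta}}\right)\pmod q$ uniquely in $\mathbb{Z}[q]$. This $\beta$ lies in $\mathbb{Z}^*[q]$---that is, $\beta\not\equiv0\pmod q$---if and only if $b^{\alpha\pmod{\delta}}\not\equiv b\pmod q$, i.e.\ $\alpha\notin\mathcal{N}$; in that case it equals $\beta_k(\alpha)$ and produces exactly one admissible $x(\alpha)$. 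Thus the pairs contributing to $Z_k$ are exactly those with $\alpha\in\mathbb{Z}^*[q-1]\setminus\mathcal{N}$, and the uniqueness in Lemma \ref{SimpleLemma0} guarantees that distinct $\alpha$ give distinct $x(\alpha)$, so (\ref{structureZk}) follows with no overcounting.

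I expect the only genuinely delicate point to be the bookkeeping of the constraint $\beta\in\mathbb{Z}^*[q]$: the requirement that $\beta$ be a unit modulo $q$ (equivalently, nonzero) is exactly what excludes $\alpha\in\mathcal{N}$ in the case $k\ge1$ and what makes $\beta_k(\alpha)$ well-defined only off $\mathcal{N}$. One should note explicitly that the congruence (\ref{betaEq}) has no solution $\beta\in\mathbb{Z}^*[q]$ when $\alpha\in\mathcal{N}$, so that despite the blanket phrasing in the statement, $\beta_k(\alpha)$ is meaningful precisely for $\alpha\in\mathbb{Z}^*[q-1]\setminus\mathcal{N}$.
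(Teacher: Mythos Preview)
Your proposal is correct and follows essentially the same route as the paper's own proof: both use the bijection of Lemma \ref{SimpleLemma0} to write $x=\alpha q+\beta(q-1)$, reduce the defining congruence to $b^{\alpha\pmod{\delta}}\equiv b-k\beta\pmod{q}$, and then split into the cases $k=0$ and $k\neq0$. If anything, your reduction is slightly more explicit than the paper's, and your closing remark about $\beta_k(\alpha)$ being well-defined only off $\mathcal{N}$ is a useful clarification of a point the statement itself glosses over.
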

\begin{proof}
    Take $x\in\mathbb{Z}^*[q(q-1)]$. By Lemma \ref{SimpleLemma0} \ref{SimpleLemmaConv}, we have $x=\alpha q+\beta(q-1)\pmod{q(q-1)}$.
    Assume first that $\alpha\in\mathcal{N}$, i.e., $\alpha=1+n\delta$. It follows that for any $\beta\in\mathbb{Z}^*[q]$ there holds $b^x\equiv b^q\equiv b\pmod{q}\equiv b+0\times x\pmod{q}$, so clearly, $x\in Z_0$. On the other hand, if $\alpha\notin\mathcal{N}$, then $\alpha=r+n\delta$ for $r\in\{0,2,\ldots,\delta-1\}$ and in every case $b^x\equiv b^{rq}\equiv b^{r}\not\equiv b\pmod{q}$ so for every $x=\alpha q+\beta(q-1)$ with $\alpha\notin\mathcal{N}$
    we have $x\notin Z_0$. This proves (\ref{structureZ0}).

    Take now $\alpha\in\mathbb{Z}^*[q-1]\setminus\mathcal{N}$ and $k\neq0$. Write again $\alpha=r+n\delta$ with $r\in\{0,2,\ldots,\delta-1\}$. As in the proof of Lemma \ref{MainLemma}, the equation (\ref{SetEq}) holds. Thus, for fixed $\alpha\in\mathbb{Z}^*[q-1]\setminus\mathcal{N}$, there is a unique solution $\beta_k(\alpha)\in\mathbb{Z}^*[q]$ to (\ref{betaEq}). Furthermore, $b^x\equiv b^r\pmod{q}$ so clearly $x(\alpha)$ defined by (\ref{solDef}) belongs to $Z_k$. Thus,
    $$
        \{x(\alpha):\alpha\in\mathbb{Z}^*[q-1]\setminus\mathcal{N}\}\subset Z_k.
    $$
    On the other hand, take any $x=\alpha q+\beta (q-1)\in Z_k$. (By Lemma \ref{SimpleLemma0} \ref{SimpleLemmaConv} we may assume every $x\in Z_k$ has this form.) It is clear that $\alpha\notin\mathcal{N}$, since otherwise $b^x\equiv b^q\equiv b\pmod{q}$ and then 
    $$
        b^x\equiv kx+b\pmod{q}\Leftrightarrow b\equiv kx+b\pmod{q}\Leftrightarrow kx\equiv 0\pmod{q}
    $$
    and this forces the contradiction $k=0$ since $\gcd(x,q)=1$. Thus, $\alpha\in\mathbb{Z}^*[q-1]\setminus\mathcal{N}$. A very similar argument shows that for fixed $\alpha\in\mathbb{Z}^*[q-1]\setminus\mathcal{N}$ the only possible value for $\beta$ solves (\ref{betaEq}). Since the solution to (\ref{betaEq}) is unique, this concludes the proof of (\ref{structureZk}).
\end{proof}
\begin{rem}
    \label{compRemark}
    We illustrate how to use the general method of Theorem \ref{structureThm} to determine $Z_k$ in a specific case. Take $b=2$, $q=13$ and $k=8$.
    We have $\mathbb{Z}^*[12]=\{1,5,7,11\}$ and since 2 is a primitive root modulo 13 we have $\delta=12$. Therefore $\mathcal{N}=\{1\}$ so
    $\mathbb{Z}^*[12]\setminus\mathcal{N}=\{5,7,11\}$. Consequently, for $\alpha\in\{5,7,11\}$, $\beta_8(\alpha)$ is the unique solution in $\mathbb{Z}^*[13]$ to
    $$
        8\beta\equiv 2-2^{\alpha\pmod{12}}\pmod{13}\equiv2-2^{\alpha}\pmod{13}.
    $$
    Using that $8^{-1}\pmod{13}=5$, we easily get
    $$
        \beta_8(5)=6,\quad \beta_8(7)=7,\quad \beta_8(11)=1.
    $$
    Consequently, the solutions (\ref{solDef}) are
    $$
        Z_8=\{x(5),\, x(7),\, x(11)\}=\{137, 19, 155\}.
    $$
    See also Figure \ref{table1} below.
\end{rem}

\begin{proof}[Proof of Proposition \ref{dichotomy}]
    It is instructive to prove property \ref{seconditem} first.
    Assume that $b^2\equiv 1\pmod{q}$. This means that $\delta=|\langle b\rangle|=2$, so $\mathcal{N}$ consists of the odd elements of $\mathbb{Z}^*[q-1]$. On the other hand, $q-1$ is even so $\mathbb{Z}^*[q-1]$ only contains odd elements. Consequently, $\mathbb{Z}^*[q-1]=\mathcal{N}$. Since we cannot select $\alpha\in\mathbb{Z}^*[q-1]\setminus\mathcal{N}$ the sets $Z_k~~(k=1,2,\ldots, q-1)$ are all empty. Hence, $Z_0=\mathbb{Z}^*[q(q-1)]$ and $\mathcal{S}_{b,q}'=\{0\}$.
    
    Assume now that $b^2\not\equiv 1\pmod{q}$. In this case, $\mathbb{Z}^*[q-1]\setminus\mathcal{N}\neq\emptyset$. For instance, $q-2\in\mathbb{Z}^*[q-1]$ and $q-2\equiv\delta-1\pmod{\delta}\neq1$ so $q-2\notin\mathcal{N}$). Since $\mathbb{Z}^*[q-1]\setminus\mathcal{N}$, the sets $Z_k$ given by (\ref{structureZk}) will be nonempty for all $k\in\mathbb{Z}[q]$. That is, the conclusion of Lemma \ref{MainLemma} holds; it follows that $\mathcal{S}_{b,q}'=\{k/q: k=0,1,\ldots,q-1\}$.
\end{proof}

We define the function $m_b(q)$ by 
\begin{equation}
    \label{mbDef}
    m_b(q)=|\mathcal{N}|
\end{equation}

\begin{proof}[Proof of Proposition \ref{countProp}]
    We first prove (\ref{card2}). Fix $k\in\{1,2,\ldots,q-1\}$. By Theorem \ref{structureThm}, there is a one-to-one correspondence between the elements of $\mathbb{Z}^*[q-1]\setminus\mathcal{N}$ and the elements of $Z_k$. Hence,
    $$
        |Z_k|=|\mathbb{Z}^*[q-1]\setminus\mathcal{N}|=|\mathbb{Z}^*[q-1]|-|\mathcal{N}|
    $$
    By definition, $|\mathbb{Z}^*[n]|=\varphi(n)$ (see \cite[Chapter 2]{A}) so $|\mathbb{Z}^*[q-1]|=\varphi(q-1)$ and this proves (\ref{card2}). We now prove (\ref{card1}). For this, we use (\ref{Disjoint}) to write
    \begin{equation}
        \nonumber
        \sum_{k=0}^{q-1}|Z_k|=|\mathbb{Z}^*[q(q-1)]|=\varphi(q(q-1)).
    \end{equation}
    Consequently, by (\ref{card1})
    \begin{eqnarray}
        \nonumber
        |Z_0|&=&\varphi(q(q-1))-(q-1)(\varphi(q-1)-m_b(q))\\
        \nonumber
        &=&(q-1)\varphi(q-1)-(q-1)\varphi(q-1)+(q-1)m_b(q)\\
        \nonumber
        &=&(q-1)m_b(q)
    \end{eqnarray}
    where we used $\varphi(ab)=\varphi(a)\varphi(b)$ for $a,b$ with $\gcd(a,b)=1$ together with $\varphi(q)=q-1$ for $q\in\mathbb{P}$ (see see \cite[Chapter 2]{A}). 
\end{proof}

\begin{rem}
    Take $b=2$ and $q=13$, we use Proposition \ref{countProp} to count the elements of the sets $Z_k~~(k=0,1,\ldots, 12)$. As in Remark \ref{compRemark},  $\mathbb{Z}^*[12]=\{1,5,7,11\}$ and $\mathcal{N}=\{1\}$ whence $\varphi(12)=4$ and $m_2(13)=1$. It follows that $|Z_0|=12$ and $|Z_k|=3$ for $k=1,2,\ldots, 12$. Below, we tabulate the elements of $Z_k$ in this case. The elements can be computed using the method of Theorem \ref{structureThm} as in Remark \ref{compRemark}, or by a simple computer search.
    \begin{figure}[H]
    \begin{tabular}{ c|l }
    $k\in\mathbb{Z}[13]$& $x\in\mathbb{Z}^*[156]$ belonging to $Z_k$\\
    \hline
    0&1, 25, 37, 49, 61, 73, 85, 97, 109, 121, 133, 145\\
    1&17, 83, 139 \\
    2&35, 41, 115 \\
    3&55, 71, 101\\
    4&11, 53, 103 \\
    5&7, 131, 149  \\
    6&5, 107, 151 \\
    7&23, 31, 125\\
    8&19, 137, 155\\
    9&77, 79, 119 \\
    10&29, 59, 127\\
    11&67, 89, 95\\
    12&43, 47, 113\\
    \hline
    \end{tabular}
    \caption{The solution sets $Z_0,Z_1,\ldots Z_{12}$ when $b=2,q=13$}
    \label{table1}
    \end{figure}
\end{rem}

\subsection*{Acknkowledgements}
The author is grateful to Professor A. Dubickas (Vilnius) for pointing out the references \cite{CKLRS}-\cite{D2}.


\begin{thebibliography}{0}
\bibitem{A} T. M. Apostol, {\it Introduction to Analytic Number Theory}, Springer, New York, NY, 1976

\bibitem{CKLRS} J. Cilleruelo, A. Kumchev, F. Luca,
J. Ru\'{e} and I. E. Shparlinski, "On the fractional parts of $a^n/n$", Bull. London Math. Soc. {\bf 45} (2013), 249-256

\bibitem{D1} A. Dubickas, "Density of some sequences modulo 1", Colloq. Math. {\bf 128} (2012), 237–244

\bibitem{D2} A. Dubickas, "Density of some special sequences modulo 1", Mathematics {\bf 11} (2023), 1727
\end{thebibliography}
\end{document}